\def\beq{\begin{equation}}
\def\eeq{\end{equation}}
\def\ba{\begin{array}}
\def\ea{\end{array}}
\numberwithin{equation}{section}
\newtheorem{theorem}{Theorem}[section]
\newtheorem{corollary}[theorem]{\textbf{Corollary}}
\newtheorem{proposition}[theorem]{\textbf{Proposition}}
\theoremstyle{remark}
\newtheorem{remark}[theorem]{\textbf{Remark}}
\theoremstyle{plain}
    \newcommand{\rmnum}[1]{\romannumeral #1}
    \newcommand{\Rmnum}[1]{\expandafter\@slowromancap\romannumeral #1@}
\begin{document}
\title{Differentiability of the $n$-Variable Function Deduced by the Differentiability of the $n-1$-Variable Function}
\author[Z. Ye, Q. Guo]{Zhenglin Ye, Qianqiao Guo}

\address{Zhenglin Ye, School of Mathematics and Statistics,  Northwestern Polytechnical University, Xi'an, 710129, Shaanxi, China}
\email{yezhenln@nwpu.edu.cn}

\address{Qianqiao Guo,  School of Mathematics and Statistics,  Northwestern Polytechnical University, Xi'an, 710129, Shaanxi, China}
\email{gqianqiao@nwpu.edu.cn}

\noindent
\begin{abstract}
In this paper, some sufficient conditions for the differentiability of the $n$-variable real-valued function are obtained, which are given based on the differentiability of the $n-1$-variable real-valued function and are weaker than classical conditions. 
\end{abstract}
\keywords{Sufficient Condition,  Differentiability, $n$-Variable Function}
\subjclass[2010]{26B05}

\maketitle


\section{Introduction}
\label{Section 1}

Consider a real-valued function $f: U(a)\to \mathbb{R}$ defined in a neighbourhood $U(a)\subset \mathbb{R}^n, n\ge2$ of the point $a:=(a_1, a_2, \cdots, a_n)$.

A classical sufficient condition for the differentiability of $f$ at the point $a$ states as (see for example \cite{Zor04}):

\begin{proposition}\label{Cl-Co}
If the function $f$ has all partial derivatives $f^\prime_{x_1}, \cdots, f^\prime_{x_n}$ at each point of the neighbourhood $U(a)$ and they are continuous at $a$, then $f$ is differentiable at $a$.  
\end{proposition}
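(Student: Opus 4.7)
The plan is to decompose the total increment $f(a+h)-f(a)$ into a telescoping sum of $n$ one-variable increments, apply the classical one-dimensional Mean Value Theorem to each piece, and then invoke continuity of the partial derivatives at $a$ to extract the linear part and show the remainder is $o(|h|)$.

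Concretely, for $h=(h_1,\ldots,h_n)$ small enough that the polygonal path $a\to (a_1+h_1,a_2,\ldots,a_n)\to (a_1+h_1,a_2+h_2,a_3,\ldots,a_n)\to\cdots\to a+h$ stays inside $U(a)$, I would write
\[
f(a+h)-f(a)=\sum_{k=1}^{n}\Delta_k,
\]
where each $\Delta_k$ is the difference between two consecutive vertices of this path and therefore involves a change in the $k$-th coordinate only. Because $f^\prime_{x_k}$ exists on all of $U(a)$, the one-dimensional Mean Value Theorem (applied in the $k$-th variable, with the other coordinates frozen) gives $\Delta_k=f^\prime_{x_k}(\xi_k)\,h_k$ for some $\xi_k$ on the segment between those two vertices. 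In particular $|\xi_k-a|\le|h|$, so $\xi_k\to a$ as $h\to 0$.

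Using continuity of $f^\prime_{x_k}$ at $a$, write $f^\prime_{x_k}(\xi_k)=f^\prime_{x_k}(a)+\alpha_k(h)$ with $\alpha_k(h)\to 0$. Summing yields
\[
f(a+h)-f(a)=\sum_{k=1}^{n}f^\prime_{x_k}(a)\,h_k+\sum_{k=1}^{n}\alpha_k(h)\,h_k,
\]
and the remainder is bounded by $\bigl(\sum_{k=1}^{n}|\alpha_k(h)|\bigr)|h|=o(|h|)$, which is precisely the definition of differentiability at $a$. The only delicate point is geometric: each intermediate vertex and each $\xi_k$ must lie in $U(a)$ to allow the MVT step, and must approach $a$ for the continuity hypothesis to take effect. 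Both follow because every such point is within distance $|h|$ of $a$ and $U(a)$ is a neighbourhood. The argument is entirely standard; the main ``obstacle'' is merely bookkeeping the telescoping in an order that keeps every one-dimensional MVT legitimate.
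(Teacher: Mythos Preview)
Your argument is correct and is exactly the standard textbook proof: telescope the full increment along a coordinate-wise polygonal path, apply the one-variable Mean Value Theorem on each segment, and use continuity of the partials at $a$ to show the remainder is $o(|h|)$. The bookkeeping about the intermediate points lying in $U(a)$ and tending to $a$ is handled correctly.

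There is nothing to compare against, however: the paper does \emph{not} prove Proposition~\ref{Cl-Co}. It merely states it as a classical fact and refers the reader to Zorich~\cite{Zor04}. The paper's own contribution is Theorem~\ref{main results-1}, whose proof uses a different decomposition (splitting off a single distinguished coordinate $x_{i_0}$ rather than telescoping over all $n$ coordinates) because its hypotheses are strictly weaker than those of Proposition~\ref{Cl-Co}. Your proof would not work under those weaker hypotheses, since you need every $f'_{x_k}$ to exist in a full neighbourhood and to be continuous at $a$, whereas Theorem~\ref{main results-1} assumes neither.
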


In this paper, we show a different sufficient condition for the differentiability of $f(x_1,x_2,\cdots,x_n)$ at the point $a$, which are weaker than that of Proposition \ref{Cl-Co}.

 The main result is stated below. 
\begin{theorem}\label{main results-1}
Assume the function $f$ has all partial derivatives at the point $a$, and has partial derivatives $f^\prime_{x_{i_0}}(x_1,\cdots,x_{i_0-1},a_{i_0},x_{i_0+1},\cdots,x_n) $ at some neighbourhood of the point $\widehat{a}_{i_0}:=(a_1,\cdots,a_{i_0-1},\wedge,a_{i_0+1},\cdots,a_n)\in \mathbb{R}^{n-1}$, for fixed $1\le i_0\le n$. Then the function $f$ is differentiable at $a$ if and only if

(\rmnum1) the $n-1$-variable function $f(x_1,\cdots,x_{i_0-1},a_{i_0},x_{{i_0}+1},\cdots,x_n)$ is differentiable at the point $\widehat{a}_{i_0}$;

(\rmnum2) the $n-1$-variable function $f^\prime_{x_{i_0}}(x_1,\cdots,x_{{i_0}-1},a_{i_0},x_{{i_0}+1},\cdots,x_n) $ is continuous at the point $\widehat{a}_{i_0}$.
\end{theorem}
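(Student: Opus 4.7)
\emph{Overall strategy.} After relabeling coordinates, assume $i_0=n$ and write $a=(a',a_n)\in\R^{n-1}\times\R$, $h=(h',h_n)$, and $H=\{x_n=a_n\}$. The substantial direction is the ``if'' direction; the plan is to insert the intermediate point $(a'+h',a_n)\in H$ and decompose
\[
f(a+h)-f(a)-\sum_{k=1}^{n} f'_{x_k}(a)\,h_k \;=\; I_1 + I_2 + I_3,
\]
with $I_1 := f(a'+h',a_n+h_n)-f(a'+h',a_n)-f'_{x_n}(a'+h',a_n)\,h_n$ (a ``transverse'' remainder), $I_2 := [f'_{x_n}(a'+h',a_n)-f'_{x_n}(a)]\,h_n$, and $I_3 := f(a'+h',a_n)-f(a)-\sum_{k<n}f'_{x_k}(a)\,h_k$ (the ``on-$H$'' remainder). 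By condition (i), which is exactly differentiability of $f|_H$ at $\widehat{a}_{i_0}$, one has $I_3=o(\|h'\|)=o(\|h\|)$; by condition (ii), the bracket in $I_2$ is $o(1)$ as $h'\to 0$, so $I_2=o(\|h\|)$. For $I_1$ the hypothesized existence of $f'_{x_n}(a'+h',a_n)$ gives, for each fixed $h'$, $I_1(h',h_n)/h_n\to 0$ as $h_n\to 0$; in particular the existence of $f'_{x_n}(a)$ forces $I_1(0,h_n)=o(h_n)$.

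\emph{Main obstacle.} The crux is to upgrade the pointwise-in-$h'$ vanishing of $I_1/h_n$ to the joint estimate $I_1(h',h_n)=o(\|h\|)$ as $(h',h_n)\to 0$; without some form of uniformity the per-$h'$ rate could in principle degrade when $h'\to 0$. My plan is a ``second-difference'' comparison: subtract the benchmark $I_1(0,h_n)$ from $I_1(h',h_n)$. A direct rearrangement shows their difference equals the transverse increment $f(a'+h',a_n+h_n)-f(a',a_n+h_n)$, minus the on-$H$ increment $f(a'+h',a_n)-f(a',a_n)$ (controlled by (i)), minus $[f'_{x_n}(a'+h',a_n)-f'_{x_n}(a)]\,h_n$ (controlled by (ii)). The problem thus reduces to showing that the transverse increment, on the off-$H$ slice $\{x_n=a_n+h_n\}$, agrees with $\sum_{k<n}f'_{x_k}(a)\,h_k$ up to $o(\|h\|)$. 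This is the step I expect to be delicate, because no hypothesis directly describes $f$ on slices other than $H$; one must instead exploit the continuity statement (ii) to transport information from $H$ to nearby slices, handling the smallness of $h_n$ and the smallness of $h'$ against one another.

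\emph{Converse direction.} Here (i) is essentially automatic: restricting the affine approximation of $f$ at $a$ to $H$ produces an affine approximation of $f|_H$ at $\widehat{a}_{i_0}$ with error $o(\|h'\|)$. For (ii), writing the differentiability expansion $f(a+v)-f(a)=\sum_k f'_{x_k}(a)v_k+r(v)$ with $r(v)=o(\|v\|)$, one gets
\[
f'_{x_n}(a'+h',a_n)-f'_{x_n}(a)\;=\;\lim_{t\to 0}\,\frac{r(h',t)-r(h',0)}{t},
\]
and the plan is to extract $f'_{x_n}(a'+h',a_n)-f'_{x_n}(a)\to 0$ as $h'\to 0$ from the little-$o$ estimate on $r$; controlling this limit uniformly in $h'$ is the secondary technical point of the theorem.
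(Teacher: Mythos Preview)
Your decomposition $I_1+I_2+I_3$ is precisely the one the paper uses: in its notation $\Delta R_n=\Delta R_{n-1}^{i_0}+\Delta f'_{x_{i_0}}\!\cdot\Delta x_{i_0}+\alpha_{i_0}\Delta x_{i_0}$, with your $I_3,I_2,I_1$ corresponding to these three summands. The paper handles $I_1$ in one line by writing ``$\alpha_{i_0}\to 0$ as $\Delta x_{i_0}\to 0$'' and then using $|\Delta x_{i_0}|\le\rho_n$. You are right to single this out as the crux: $\alpha_{i_0}=\alpha_{i_0}(h',h_n)$ depends on \emph{all} the increments, and the existence of $f'_{x_{i_0}}$ on the slice gives only $\alpha_{i_0}(h',h_n)\to 0$ as $h_n\to 0$ for each \emph{fixed} $h'$, with no uniformity whatsoever.

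This gap is not repairable --- and your proposed second--difference reduction cannot be completed --- because the statement is actually false as written. For the direction (\rmnum1)$\wedge$(\rmnum2)$\Rightarrow$ differentiable, take $n=2$, $i_0=2$, $a=0$ and
\[
f(x_1,x_2)=\frac{x_1x_2^{2}}{x_1^{2}+x_2^{2}},\qquad f(0,0)=0.
\]
Here $f(x_1,0)\equiv 0$ and $f'_{x_2}(x_1,0)\equiv 0$, so both (\rmnum1) and (\rmnum2) hold and both partials vanish at $0$; yet along $x_1=x_2=t$ one gets $f/\|h\|\to 1/(2\sqrt2)\neq 0$. In this example $\alpha_{i_0}(h_1,h_2)=h_1h_2/(h_1^{2}+h_2^{2})$, which tends to $0$ as $h_2\to 0$ for each fixed $h_1\neq 0$ but not as $(h_1,h_2)\to 0$ jointly --- exactly the failure you anticipated. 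The hypotheses say nothing about $f$ off the slice $H$ except at the single point $a$, so no amount of ``transporting information from $H$ to nearby slices'' via (\rmnum2) can control the transverse increment on $\{x_n=a_n+h_n\}$.

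Your caution about the converse is equally warranted: that direction is also false, and the paper's argument hides the problem in the very same $\alpha_{i_0}$ term (after the choice $\Delta x_{i_0}=\rho_{n-1}^{i_0}$). With $n=2$, $i_0=2$, $a=0$, put
\[
f(x_1,x_2)=x_2\,e^{-x_2^{2}/x_1^{4}}\ \ (x_1\neq 0),\qquad f(0,x_2)=0.
\]
Writing $u=x_2/x_1^{2}$ gives $|f|=x_1^{2}\,|u|e^{-u^{2}}\le Cx_1^{2}=o(\|h\|)$, so $f$ is differentiable at $0$ with zero gradient and all the standing hypotheses hold; but $f'_{x_2}(x_1,0)=1$ for $x_1\neq 0$ while $f'_{x_2}(0,0)=0$, so (\rmnum2) fails. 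Thus the limit you wrote for $f'_{x_n}(a'+h',a_n)-f'_{x_n}(a)$ in terms of $r$ genuinely need not tend to $0$. In summary: your outline matches the paper's proof, but the step you flagged as ``delicate'' is in fact the place where both directions break down.
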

\begin{remark}
(1) The interesting point of Theorem \ref{main results-1} is that the differentiability of $f(x_1,x_2,\cdots,x_n)$ at the point $a$ can be deduced by the differentiability of the $n-1$-variable real-valued function $f(x_1,x_2,\cdots,x_{i_0-1},a_{i_0},x_{i_0+1},\cdots,x_n) (1\le i_0\le n)$ with some additional assumptions.

(2) Notice that if $f^\prime_{x_k}(x_1,\cdots,x_n)$ is continuous at $a$, then the conditions (\rmnum1) and (\rmnum2) hold. But we cannot deduce the continuity of $f^\prime_{x_k}(x_1,\cdots,x_n)$ at $a$ for any $1\le k\le n$ by (\rmnum1) and (\rmnum2). Thus, the conditions of Theorem \ref{main results-1} are weaker than that of Proposition \ref{Cl-Co}. 
\end{remark}

By inducement, it is easy to see that the following holds.
\begin{theorem}\label{main results-2}
Assume the function $f$ has all partial derivatives at the point $a$, and has partial derivatives $f^\prime_{x_k}(a_1,\cdots,a_k,x_{k+1},\cdots,x_n)$ at some neighbourhood of the point $\widehat{a}^{n-k}:=(a_{k+1},\cdots,a_n)\in \mathbb{R}^{n-k} (k=1,\cdots,n-1)$. Then the function $f$ is differentiable at $a$ if and only if

(1) the $n-1$-variable function $f^\prime_{x_1}(a_1,x_2,\cdots,x_n)$ is continuous at the point $\widehat{a}^{n-1}$;

(2) the $n-2$-variable function  $f^\prime_{x_2}(a_1,a_2,x_3,\cdots,x_n)$ is continuous at the point $\widehat{a}^{n-2}$;\\

$\cdots$\\

(n-1) the $1$-variable function  $f^\prime_{x_{n-1}}(a_1,\cdots,a_{n-1},x_n)$ is continuous at the point $\widehat{a}^{n-(n-1)}=a_n$. 
\end{theorem}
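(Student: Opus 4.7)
The plan is to prove Theorem \ref{main results-2} by induction on $n \ge 2$, peeling off one variable at a time via Theorem \ref{main results-1} specialized to $i_0 = 1$. For the base case $n = 2$, Theorem \ref{main results-1} with $i_0 = 1$ asserts that $f$ is differentiable at $a$ if and only if (\rmnum1) the one-variable function $f(a_1, x_2)$ is differentiable at $a_2$ and (\rmnum2) $f'_{x_1}(a_1, x_2)$ is continuous at $a_2$. Condition (\rmnum1) is equivalent to the existence of $\frac{d}{dx_2} f(a_1, x_2)|_{x_2 = a_2} = f'_{x_2}(a)$, which is part of the standing hypothesis of Theorem \ref{main results-2}. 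Condition (\rmnum2) is exactly condition $(1)$ of Theorem \ref{main results-2} for $n = 2$.

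For the inductive step, I would assume the statement holds for $n - 1$ variables and let $f$ satisfy the hypotheses of Theorem \ref{main results-2} in $n$ variables. Applying Theorem \ref{main results-1} with $i_0 = 1$ — whose premise is exactly the neighborhood existence of $f'_{x_1}(a_1, x_2, \ldots, x_n)$ granted by the $k = 1$ case of the hypothesis — reduces the differentiability of $f$ at $a$ to two pieces: (a) the differentiability of the restriction $g(x_2, \ldots, x_n) := f(a_1, x_2, \ldots, x_n)$ at $\widehat{a}_1 = (a_2, \ldots, a_n)$, and (b) the continuity of $f'_{x_1}(a_1, x_2, \ldots, x_n)$ at $\widehat{a}_1$, which is precisely condition $(1)$ of Theorem \ref{main results-2}. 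It then remains to apply the inductive hypothesis to $g$ to convert (a) into conditions $(2)$ through $(n-1)$ of Theorem \ref{main results-2}.

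The only real obstacle is notational bookkeeping. One must verify that $g$ meets the hypotheses of Theorem \ref{main results-2} in $n - 1$ variables: since $g'_{x_k}(x_2, \ldots, x_n) = f'_{x_k}(a_1, x_2, \ldots, x_n)$ for $k = 2, \ldots, n$, the required neighborhood existence of $g'_{x_k}(a_2, \ldots, a_k, x_{k+1}, \ldots, x_n)$ follows by setting $x_1 = a_1$ in the corresponding hypothesis on $f$, and existence of all partials of $g$ at $\widehat{a}_1$ follows from existence of the partials of $f$ at $a$. Likewise, condition $(j)$ of Theorem \ref{main results-2} applied to the $(n-1)$-variable function $g$, for $j = 1, \ldots, n-2$, is exactly condition $(j+1)$ of Theorem \ref{main results-2} applied to $f$. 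With this index shift verified, the inductive statement for $g$ combines with Theorem \ref{main results-1} to yield the full statement for $f$, closing the induction.
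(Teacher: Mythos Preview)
Your proposal is correct and is exactly the approach the paper has in mind: the paper's own proof reads in full ``By using Theorem \ref{main results-1}, it is easy to end the proof. We omit it here.'' Your induction on $n$ with $i_0=1$ at each step, together with the index-shift bookkeeping for the restricted function $g$, is precisely the routine verification the paper leaves to the reader.
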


As an example, the case $n=3$ is stated as follows.
\begin{corollary}\label{3-dimensionalcase}
Assume the function $f(x_1,x_2,x_3)$ has all partial derivatives at the point $a$, and has partial derivative $f^\prime_{x_1}(a_1, x_2, x_3)$ at some neighbourhood of the point $(a_2,a_3)$,
and has partial derivative $f^\prime_{x_2}(a_1, a_2, x_3)$ at some neighbourhood of the point $a_3$. Then the function $f$ is differentiable at $a$ if and only if  $f^\prime_{x_1}(a_1, x_2, x_3)$ is continuous at the point $(a_2,a_3)$,
and $f^\prime_{x_2}(a_1, a_2, x_3)$ is continuous at the point $a_3$.
\end{corollary}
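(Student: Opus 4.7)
The plan is to obtain Corollary \ref{3-dimensionalcase} by a double application of Theorem \ref{main results-1}, unwinding the induction hidden inside Theorem \ref{main results-2} explicitly for $n=3$. The corollary is the case $n=3$ of Theorem \ref{main results-2}, so conceptually nothing new is needed beyond peeling off one variable at a time and invoking the main theorem on each layer; the work lies in checking that the hypotheses of Theorem \ref{main results-1} are satisfied at each stage.

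First I would apply Theorem \ref{main results-1} to $f(x_1,x_2,x_3)$ with $i_0=1$. The hypotheses are immediate: $f$ has all partial derivatives at $a=(a_1,a_2,a_3)$ by assumption, and $f^\prime_{x_1}(a_1,x_2,x_3)$ exists in a neighbourhood of $(a_2,a_3)$. Theorem \ref{main results-1} then gives
\[
f\text{ is differentiable at }a\ \Longleftrightarrow\ \begin{cases}\textbf{(i)}\ g(x_2,x_3):=f(a_1,x_2,x_3)\text{ is differentiable at }(a_2,a_3),\\ \textbf{(ii)}\ f^\prime_{x_1}(a_1,x_2,x_3)\text{ is continuous at }(a_2,a_3).\end{cases}
\]

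Second, I would apply Theorem \ref{main results-1} to the two-variable function $g(x_2,x_3)=f(a_1,x_2,x_3)$, again with index corresponding to $x_2$. The hypotheses transfer: the existence of $f^\prime_{x_2}(a_1,a_2,a_3)$ and $f^\prime_{x_3}(a_1,a_2,a_3)$ gives that $g$ has all partials at $(a_2,a_3)$, and the assumed existence of $f^\prime_{x_2}(a_1,a_2,x_3)$ in a neighbourhood of $a_3$ is exactly the slice-partial hypothesis for $g$. The theorem then reduces condition \textbf{(i)} above to
\[
\begin{cases}\textbf{(i$'$)}\ f(a_1,a_2,x_3)\text{ is differentiable at }a_3,\\ \textbf{(ii$'$)}\ f^\prime_{x_2}(a_1,a_2,x_3)\text{ is continuous at }a_3.\end{cases}
\]
Finally, I would observe that \textbf{(i$'$)} is automatic: differentiability of a one-variable function at a point is just the existence of its derivative there, and $f^\prime_{x_3}(a_1,a_2,a_3)$ exists by hypothesis. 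Chaining the equivalences gives exactly the statement of the corollary.

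The main obstacle, if any, is purely bookkeeping: one must be careful that at each step the requisite one-sided ``partial derivative on a slice'' assumption of Theorem \ref{main results-1} really does follow from what is given in the corollary, since the corollary only posits partial derivatives on the nested slices $\{x_1=a_1\}$ and $\{x_1=a_1,x_2=a_2\}$ rather than on a full neighbourhood. I would therefore state the two applications above carefully to make the induction on the number of variables transparent, making clear that Corollary \ref{3-dimensionalcase} is a direct consequence of Theorem \ref{main results-1} applied twice.
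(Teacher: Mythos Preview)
Your proposal is correct and follows exactly the approach the paper intends: the corollary is presented as the $n=3$ instance of Theorem \ref{main results-2}, whose proof the paper reduces to repeated use of Theorem \ref{main results-1} and then omits. Your two-step application of Theorem \ref{main results-1} (first with $i_0=1$ on $f$, then on the restricted function $g(x_2,x_3)=f(a_1,x_2,x_3)$), together with the observation that condition \textbf{(i$'$)} is automatic from the existence of $f'_{x_3}(a)$, is precisely this omitted induction made explicit.
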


\begin{remark}
The functions $f^\prime_{x_1}(a_1, x_2, x_3)$ and  $f^\prime_{x_2}(a_1, a_2, x_3)$ above  can also be replaced by, for example, $f^\prime_{x_2}(x_1, a_2, x_3)$ and $f^\prime_{x_3}(x_1, a_2, a_3)$, or $f^\prime_{x_3}(x_1, x_2, a_3)$ and $f^\prime_{x_1}(a_1, x_2, a_3)$. 
\end{remark}
\section{Proofs of the main results}
\label{Section 2}

\begin{proof}[Proof of Theorem \ref{main results-1}]
   
(\Rmnum 1) The ``if" part.

Denote
\begin{eqnarray}\label{equ2-1}
&&\Delta R_n\\\nonumber
&:=&f(a_1+\Delta x_1, a_2+\Delta x_2, \cdots, a_n+\Delta x_n)-f(a_1, a_2, \cdots, a_n)-\sum\limits_{k=1}^n f^\prime_{x_k}(a_1, a_2, \cdots, a_n)\Delta x_k,\\\label{equ2-2}
&&\Delta R_{n-1}^{i_0}\\\nonumber
&:=&f(a_1+\Delta x_1, \cdots,a_{{i_0}-1}+\Delta x_{{i_0}-1}, a_{i_0}, a_{{i_0}+1}+\Delta x_{{i_0}+1}, \cdots, a_n+\Delta x_n)-f(a_1, a_2, \cdots, a_n)\\\nonumber
&&-\sum\limits_{k=1,k\neq {i_0}}^n f^\prime_{x_k}(a_1, a_2, \cdots, a_n)\Delta x_k
\end{eqnarray}
and $$\rho_n:= \Bigg(\sum\limits_{k=1}^n (\Delta x_k)^2 \Bigg)^{\frac{1}{2}},\ \ \rho_{n-1}^{i_0}:= \Bigg(\sum\limits_{k=1,k\neq {i_0}}^n (\Delta x_k)^2 \Bigg)^{\frac{1}{2}}.$$
Since $f$ is differentiable at $a$, we have
$$\lim\limits_{\rho_n \to 0}\frac{\Delta R_n}{\rho_n}=0.$$
Let $\Delta x_{i_0}=0$. Then it is easy to see that 
\begin{equation}\label{equ2-3}
\lim\limits_{\rho_{n-1}^{i_0} \to 0}\frac{\Delta R_{n-1}^{i_0}}{\rho_{n-1}^{i_0}}=0,
\end{equation}
which implies that the $n-1$-variable function $f(x_1,\cdots,x_{{i_0}-1},a_{i_0},x_{{i_0}+1},\cdots,x_n)$ is differentiable at the point $\widehat{a}_{i_0}$.

On the other hand, denote
\begin{eqnarray*}
&&\Delta f^\prime_{x_{i_0}}(a_1,\cdots,a_{{i_0}-1},\wedge,a_{{i_0}+1},\cdots,a_n)\\
&:=&f^\prime_{x_{i_0}}(a_1+\Delta x_1, \cdots,a_{{i_0}-1}+\Delta x_{{i_0}-1}, a_{i_0}, a_{{i_0}+1}+\Delta x_{{i_0}+1} \cdots, a_n+\Delta x_n)-f^\prime_{x_{i_0}}(a_1, \cdots, a_n).
\end{eqnarray*}   
Since $f$ has all derivatives at $a$, by using \eqref{equ2-1} and  \eqref{equ2-2}, we have 
\begin{eqnarray}\nonumber
&&\Delta R_n\\\nonumber
&=&f(a_1+\Delta x_1, \cdots, a_{{i_0}-1}+\Delta x_{{i_0}-1}, a_{i_0}, a_{{i_0}+1}+\Delta x_{{i_0}+1}, \cdots, a_n+\Delta x_n)\\\nonumber
&&+f^\prime_{x_{i_0}}(a_1+\Delta x_1, \cdots, a_{{i_0}-1}+\Delta x_{{i_0}-1}, a_{i_0}, a_{{i_0}+1}+\Delta x_{{i_0}+1}, \cdots, a_n+\Delta x_n)\Delta x_{i_0}+\alpha_{i_0} \Delta x_{i_0}\\\nonumber
&&-f(a_1, a_2, \cdots, a_n)-\sum\limits_{k=1}^n f^\prime_{x_k}(a_1, a_2, \cdots, a_n)\Delta x_k,\\\label{equ2-31}
&=&\Delta R_{n-1}^{i_0}+\Delta f^\prime_{x_{i_0}}(a_1,\cdots,a_{{i_0}-1},\wedge,a_{{i_0}+1},\cdots,a_n)\Delta x_{i_0}+\alpha_{i_0} \Delta x_{i_0},
\end{eqnarray}
where $\alpha_{i_0}\to0$ as $\Delta x_{i_0}\to 0$. 
Take $\Delta x_{i_0}=\rho_{n-1}^{i_0}$. Then $\rho_n=2^{\frac{1}{2}}\rho_{n-1}^{i_0}$. Noticing that $f$ is differentiable at $a$,  by using \eqref{equ2-3} and \eqref{equ2-31} we have
\begin{eqnarray*}
0&=&\lim\limits_{\rho_n \to 0}\frac{\Delta R_n}{\rho_n}\\\nonumber
&=&\lim\limits_{\rho_n \to 0}\Bigg(\frac{\Delta R_{n-1}^{i_0}}{\rho_n}+\Delta f^\prime_{x_{i_0}}(a_1,\cdots,a_{{i_0}-1},\wedge,a_{{i_0}+1},\cdots,a_n)\frac{\Delta x_{i_0}}{\rho_n}+\alpha_{i_0} \frac{\Delta x_{i_0}}{\rho_n}\Bigg)\\\nonumber
&=&2^{-\frac{1}{2}}\lim\limits_{\rho_{n-1}^{i_0} \to 0}\Delta f^\prime_{x_{i_0}}(a_1,\cdots,a_{{i_0}-1},\wedge,a_{{i_0}+1},\cdots,a_n),
\end{eqnarray*}
which implies that the partial derivative $f^\prime_{x_{i_0}}(x_1,\cdots,x_{{i_0}-1},a_{i_0},x_{{i_0}+1},\cdots,x_n)$ is continuous at the point $\widehat{a}_{i_0}$.

(\Rmnum 2) The ``only if" part.

Again by using \eqref{equ2-31}, we have 
\begin{eqnarray}\label{equ2-4}
&&\lim\limits_{\rho_n \to 0}\frac{\Delta R_n}{\rho_n}\\\nonumber
&=&\lim\limits_{\rho_n \to 0}\Bigg(\frac{\Delta R_{n-1}^{i_0}}{\rho_n}+\Delta f^\prime_{x_{i_0}}(a_1,\cdots,a_{{i_0}-1},\wedge,a_{{i_0}+1},\cdots,a_n)\frac{\Delta x_{i_0}}{\rho_n}+\alpha_{i_0} \frac{\Delta x_{i_0}}{\rho_n}\Bigg),
\end{eqnarray}
where $\alpha_{i_0}\to0$ as $\Delta x_{i_0}\to 0$. Since the $n-1$-variable function $f(x_1,\cdots,x_{{i_0}-1},a_{i_0},x_{{i_0}+1},\cdots,x_n)$ is differentiable at the point $\widehat{a}_{i_0}$, then \eqref{equ2-3} holds. Thus
\begin{equation}\label{equ2-5}
\lim\limits_{\rho_n \to 0}\frac{\Delta R_{n-1}^{i_0}}{\rho_n}=\lim\limits_{\rho_{n-1}^{i_0} \to 0}\frac{\Delta R_{n-1}^{i_0}}{\rho_{n-1}^{i_0}}\cdot \frac{\rho_{n-1}^{i_0}}{\rho_n}=0.
\end{equation}
On the other hand, the partial derivative $f^\prime_{x_{i_0}}(x_1,\cdots,x_{{i_0}-1},a_{i_0},x_{{i_0}+1},\cdots,x_n)$ is continuous at the point $\widehat{a}_{i_0}$. Then
\begin{equation}\label{equ2-6}
\lim\limits_{\rho_{n-1}^{i_0} \to 0}\Delta f^\prime_{x_{i_0}}(a_1,\cdots,a_{{i_0}-1},\wedge,a_{{i_0}+1},\cdots,a_n)=0.
\end{equation}
Therefore by using \eqref{equ2-4}, \eqref{equ2-5} and \eqref{equ2-6}, we conclude that   
\begin{equation*}
\lim\limits_{\rho_n \to 0}\frac{\Delta R_n}{\rho_n}=0.
\end{equation*}
That is,  the function $f$ is differentiable at $a$.   
\end{proof}

\begin{proof}[Proof of Theorem \ref{main results-2}]
By using Theorem \ref{main results-1}, it is easy to end the proof. We omit it here. 
\end{proof}

\smallskip

\end{document}